\newtheorem{theorem}{Theorem}
\newtheorem{proposition}[theorem]{Proposition}
\newtheorem{lemma}[theorem]{Lemma}
\theoremstyle{definition}
\newtheorem{definition}[theorem]{Definition}
\theoremstyle{remark}
\newtheorem{remark}{Remark}[section]
\title{$C^1$-genericity of unbounded distortion for ergodic conservative expanding circle maps.}
\author{Hamza Ounesli$^{1,2}$}
\date{%
    $^1$\small\textit{Scuola Internazionale Superiorie di Studi Avanzati (SISSA), Trieste, Italy.} \vspace{0.2cm}%
   \\ $^2$\textit{Abdus Salam International Centre for Theoretical Physics (ICTP), Trieste, Italy.}\\(\textit{Email: hounesli@sissa.it and hounesli@ictp.it})\\[2ex]%
    \today
}
\begin{document}
\maketitle
\begin{abstract}
We prove that within the space of ergodic Lebesgue-preserving $C^1$ expanding maps of the circle, unbounded distortion is $C^1$-generic.
\end{abstract}
\selectlanguage{english}

\section{Introduction and statement of results.}
Let $E^1(\mathbb S^1)$ be the space of $C^1$ orientation-preserving uniformly expanding  maps on the circle, i.e. \( C^{1}\) maps \( f: \mathbb S^{1}\to \mathbb S^{1}\)  for which   there exists some uniform constant $\sigma>1$ such that $f'(x)\geq \sigma$, We let \( \lambda\) denote Lebesgue measure on \( \mathbb S^{1}\) and recall that a map \( f: \mathbb S^{1}\to \mathbb S^{1}\) is said to preserve Lebesgue measure if   \( \lambda (f^{-1}(A))= \lambda (A)\) for any measurable set \( A \subseteq \mathbb S^{1}\).  We let 
\begin{equation*}
    \Gamma_{\lambda}(\mathbb{S}^1) \coloneqq \lbrace f\in E^{1}(\mathbb{S}^1): f\ \text{preserves Lebesgue measure} \rbrace.
\end{equation*}
 The simplest and well known examples of  maps in $ \Gamma_{\lambda}(\mathbb{S}^1)$ are the maps of the form \( f(x) = \kappa x \) mod 1, for \( \kappa \in \mathbb N, \kappa\geq 2 \). These maps are affine and are therefore easily seen to preserve Lebesgue measure \( \lambda\). Although it is not immediately intuitive, it turns out there are many other maps in $E^1(\mathbb S^1)$ which also preserve Lebesgue measure without being piecewise affine. The following result makes explicit the remarkable flexibility in their construction. 

We first introduce some notation. For any \( n \geq 2 \), let  $\lbrace I_{i}\rbrace_{1\leq i\leq n}$ be a partition of the unit interval into non-trivial adjacent closed subintervals \( I_{i}=[x_{i}^{-}, x_{i}^{+}] \) which intersect only at the endpoints, so that \( x_{0}^{-}=0\) and \( x_{n}^{+}=1\).  Let $1\leq i_{0}\leq n$ and suppose that for each $i\neq i_{0}$ there is given a \( C^{1}\)  expanding diffeomorphism $f_{i}: I_{i}\to [0,1]$. We are interested in whether we can construct  a \( C^{1}\)  expanding diffeomorphism $f_{i_{0}}: I_{i_{0}}\to [0,1]$ which defines a piecewise \( C^{1}\) full branch map preserving Lebesgue measure, and further more whether this can actually be constructed in such a way as to represent a a \( C^{1}\) uniformly expanding circle map. The following result give some natural and explicitly verifiable necessary and sufficient conditions for this to be the case. 

\begin{proposition}[Missing Branch Extension Proposition]\label{prop:extension}
There exists a (unique) extension to a  Lebesgue-preserving uniformly expanding full branch map  if and only if  for all \( x \in [0,1]\) we have:
\begin{equation}\label{eq:condition 1}
    \sum_{\substack{1\leq i\leq n\\ i\neq i_{0}}}\dfrac{1}{f'_{i}\circ f_{i}^{-1}(x)}<1 
\end{equation}
Moreover, this extension represents a \( C^{1}\) uniformly expanding circle map of degree \( n \) if and only if 
\begin{equation}\label{eq:matching}
f'_{i-1}(x_{i-1}^{+})=f'_{i}(x_{i}^{-})
\quad \text{ and } \quad 
 f'_{i+1}(x_{i+1}^{-}) = f'_{i}(x_{i}^{+})
\end{equation}
for all \( i\neq i_{0}\) (if \( i =0 \) we  replace \( i-1\) by \( n \) in the first equality, and if \( i=n\) we replace \( i+1\) by 1 in the second inequality),  
and 
\begin{equation}\label{eq:The condition to extend a full branch to an expanding map}
f'_{i_{0}-1}(x_{i_{0}-1}^+)=\dfrac{1}{1-\sum_{\substack{i\neq i_{0}}}f'_{i}(x_{i}^{-})}
\end{equation}
and 
\begin{equation}\label{eq:The condition to extend a full branch to an expanding map2}
f'_{i_{0}+1}(x_{i_{0}+1}^-)=\dfrac{1}{1-\sum_{\substack{i\neq i_{0}}}f'_{i}(x_{i}^{+})}
\end{equation}
(where also, if  \( i_{0}=1\) we replace \( i_{0}-1\) by \( n \) in the left hand side of  \eqref{eq:The condition to extend a full branch to an expanding map} and if \( i_{0}=n\) we replace \( i_{0}+1\) by  1 in the left hand side of \eqref{eq:The condition to extend a full branch to an expanding map2}).
\end{proposition}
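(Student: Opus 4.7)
The plan is to extract the unique candidate for the missing branch $f_{i_0}$ from the pointwise Perron--Frobenius identity that characterises Lebesgue preservation, and then to verify that this candidate is a genuine $C^1$ expanding diffeomorphism precisely when the stated hypotheses hold. For any piecewise $C^1$ full branch map $f$ with branches $f_1,\dots,f_n$, Lebesgue preservation is equivalent to the pointwise identity
\begin{equation}\label{eq:my-PF}
\sum_{i=1}^{n}\frac{1}{f'_i(f_i^{-1}(y))}=1\qquad \text{for every } y\in[0,1],
\end{equation}
obtained from $\mathcal L_f\mathbf 1=\mathbf 1$ for the transfer operator; the passage from the measure identity $\lambda(f^{-1}A)=\lambda(A)$ to the pointwise equality uses the full branch decomposition $\lambda(f^{-1}A)=\sum_i \int_A 1/(f'_i\circ f_i^{-1})\,d\lambda$ together with continuity of each integrand. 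Solving \eqref{eq:my-PF} for the $i_0$-th term forces $(f_{i_0}^{-1})'(y)=g(y)$, where
\[
g(y):=1-\sum_{i\neq i_0}\frac{1}{f'_i\circ f_i^{-1}(y)},
\]
which immediately gives uniqueness of the extension.

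The first concrete step is to show that the ansatz $f_{i_0}^{-1}(y):=x_{i_0}^{-}+\int_0^y g(s)\,ds$ automatically lands in $I_{i_0}$ at both endpoints. The crucial identity is the change of variables $s=f_i(x)$, which yields $\int_0^1 1/(f'_i\circ f_i^{-1})(s)\,ds=|I_i|$ for each $i\neq i_0$, hence $\int_0^1 g=1-\sum_{i\neq i_0}|I_i|=|I_{i_0}|$ irrespective of positivity. The $C^1$ uniformly expanding property of $f_{i_0}$ then reduces to requiring $0<\inf g\le \sup g<1$; since each $1/f'_i\circ f_i^{-1}$ is strictly positive, the upper bound $\sup g<1$ is automatic, and compactness of $[0,1]$ together with continuity of $g$ reduce the lower bound to the strict pointwise positivity \eqref{eq:condition 1}. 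Necessity of \eqref{eq:condition 1} comes from reading off $1/f'_{i_0}(f_{i_0}^{-1}(y))>0$ from any expanding extension.

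For the circle-map characterisation, the map $f$ is already $C^1$ on the interior of each subinterval by construction, so being $C^1$ on $\mathbb S^1$ is equivalent to continuity of $f'$ at every junction point $x_i^{\pm}$, with $0$ and $1$ identified. At junctions $x_i^{-}=x_{i-1}^{+}$ not involving $I_{i_0}$ this is exactly \eqref{eq:matching}. At the two junctions adjacent to $I_{i_0}$ one uses $f'_{i_0}\circ f_{i_0}^{-1}=1/g$ together with $f_i(x_i^{-})=0$ and $f_i(x_i^{+})=1$ to compute
\[
f'_{i_0}(x_{i_0}^{-})=\frac{1}{g(0)},\qquad f'_{i_0}(x_{i_0}^{+})=\frac{1}{g(1)},
\]
and matching these values with $f'_{i_0-1}(x_{i_0-1}^{+})$ and $f'_{i_0+1}(x_{i_0+1}^{-})$ respectively yields \eqref{eq:The condition to extend a full branch to an expanding map} and \eqref{eq:The condition to extend a full branch to an expanding map2} (with the cyclic relabelling when $i_0\in\{1,n\}$).

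I expect the only genuinely non-routine step to be the derivation of the pointwise identity \eqref{eq:my-PF} from the measure-theoretic notion of Lebesgue preservation, since this requires justifying the transition from equality of integrals over arbitrary Borel sets to a pointwise equality of continuous functions, for which the full branch $C^1$ hypothesis is essential. Everything else---the $|I_{i_0}|$ computation, the bounds on $g$, and the boundary matching---is a direct consequence of elementary calculus and the explicit formula for $f_{i_0}^{-1}$.
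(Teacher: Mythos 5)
Your argument is correct and arrives at the same characterisation, but it handles the technical core differently from the paper, and your route is both more elementary and more complete. The paper likewise starts from the transfer-operator identity $\sum_{y\in f^{-1}(x)}1/f'(y)=1$, but it then reads the resulting relation as a first-order ODE for $f_{i_0}$ itself (the unknown branch appears composed inside the right-hand side), invokes Peano's existence theorem, proves surjectivity $f_{i_0}(x_{i_0}^{+})=1$ by a separate measure-theoretic contradiction (if the image fell short one would get $\lambda(f^{-1}([0,1]))<1$), and explicitly acknowledges that uniqueness does not follow from Peano and must be argued ``using the dynamics''---a step the written proof never actually carries out. By solving instead for the inverse branch you turn the same relation into the explicit quadrature $f_{i_0}^{-1}(y)=x_{i_0}^{-}+\int_0^y g(s)\,ds$ with $g$ determined entirely by the known branches, so existence and uniqueness come simultaneously and for free; and your change-of-variables identity $\int_0^1 1/(f'_i\circ f_i^{-1})=|I_i|$ replaces the paper's contradiction argument by the one-line computation $\int_0^1 g=|I_{i_0}|$, which pins the endpoint exactly. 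What the paper's phrasing buys is a dynamical interpretation of the surjectivity failure, but at the cost of a genuine gap (uniqueness) that your formulation closes. One further dividend of your endpoint evaluation $f'_{i_0}(x_{i_0}^{-})=1/g(0)$ and $f'_{i_0}(x_{i_0}^{+})=1/g(1)$: it produces the adjacency conditions in the form $1/\bigl(1-\sum_{i\neq i_0}1/f'_i(x_i^{\mp})\bigr)$, with reciprocals of the derivatives in the sum, which is what the $C^1$ gluing actually requires; the displayed conditions \eqref{eq:The condition to extend a full branch to an expanding map} and \eqref{eq:The condition to extend a full branch to an expanding map2} omit those reciprocals and should be read in your corrected form.
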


\begin{remark}
We remark that \eqref{eq:The condition to extend a full branch to an expanding map} is a very mild and very ``open'' condition that allows a huge amount of flexibility in the choice of the given branches, thus indicating that the space of such Lebesgue preserving maps is very large. Condition  \eqref{eq:matching} gives essentially trivial matching conditions on the derivatives at the left and right endpoints of the domains of each branch which ensure that the map is \( C^{1}\) when considered as a circle map.  Conditions \eqref{eq:The condition to extend a full branch to an expanding map} and \eqref{eq:The condition to extend a full branch to an expanding map2} are less intuitively immediate but essentially ensure that the unique extension given in the first part of the proposition also satisfies such matching conditions. 
\end{remark}

\begin{remark}
For \( n =2 \), i.e. circle maps of degree 2, this result was proved in  \cite{9},  
where it was used to show that the space of degree 2 uniformly expanding circle maps which preserve Lebesgue measure is homeomorphic to the infinite-dimensional Lie group \( \mathbb T^{2}\setminus \{diag \ \mathbb T^{2}\} \times {\mathfrak D}^{1} (\mathbb S^{1}, 0)\), where \( \mathbb T^{2}\)  is the two-dimensional torus and \( {\mathfrak D}^{1} (\mathbb S^{1}, 0)\) is the space of \( C^{1}\) diffeomorphisms of \( \mathbb S^{1}\) which has \( 0 \) as a fixed point, endowed with the \( C^{1}\) topology. The space \(     \Gamma_{\lambda}(\mathbb{S}^1) \) includes maps of arbitrary degree and thus, in particular, all degree 2 maps considered in \cite{9} and therefore strictly contains in particular a space homeomorphic to \( \mathbb T^{2}\setminus \{diag \ \mathbb T^{2}\} \times {\mathfrak D}^{1} (\mathbb S^{1}, 0)\).  
\end{remark}

Theorem \ref{prop:extension} does not tell us anything about the \emph{ergodicity} of Lebesgue measure. Indeed, remarkably, there exist examples of maps in  $\Gamma_{\lambda}(\mathbb{S}^{1})$ for which \emph{Lebesgue measure is invariant but not ergodic} \cite{13} and therefore 
\begin{equation*}
    \Gamma_{\lambda,er}(\mathbb{S}^1)=\lbrace f\in \Gamma_{\lambda}(\mathbb{S}^1): \lambda  \text{ is ergodic} \rbrace \text{ is strictly contained in }     \Gamma_{\lambda}(\mathbb{S}^1).
\end{equation*}
On the other hand, it was proved in   \cite{2} that  $\Gamma_{\lambda,er}(\mathbb{S}^{1})$ is \emph{residual} in  \(  \Gamma_{\lambda}(\mathbb{S}^1) \) in the \( C^{1}\) topology,  and so,  \emph{generically}, Lebesgue measure is ergodic whenever it is invariant. thus implying that $\Gamma_{\lambda,er}(\mathbb{S}^{1})$  is also a relatively large set. One of the main tools used to prove ergodicity in specific classes of uniformly expanding circle or full branch maps  is  \emph{bounded distortion}. We recall that every   $f\in E^{1}(\mathbb S^1)$ admits a family of partitions \( \mathcal P_{n}\coloneqq \{\omega_{ n, i}\}\)  which are  injectivity domains of $f^{n}$,  and say that \( f \) has \emph{bounded distortion} if 
\begin{equation*}
    \mathcal{D} \coloneqq \sup_{n\geq 1}  \sup_{\omega_{n,i}\in \mathcal P_{n}}\sup_{x,y\in \omega_{n, i}}
    \log\dfrac{|(f^n)'(x)|}{|(f^n)'(y)|} < \infty.
\end{equation*}
It is well known, by standard arguments, that bounded distortion implies ergodicity of Lebesgue measure (even if Lebesgue measure is not invariant). The converse is however not true and there are maps for which Lebesgue measure is ergodic but which do not satisfy bounded distortion. 
We then let 
\begin{equation*}
    \mathcal{B}=\lbrace f\in \Gamma_{\lambda, er}(\mathbb{S}^1):  f\ \text{has bounded distortion}\rbrace
\   \text{ and } \ 
     \mathcal B^{c} \coloneqq  \Gamma_{\lambda, er}(\mathbb{S}^1)\setminus \mathcal B 
\end{equation*}
be the two complementary sets of \( \Gamma_{\lambda, er}(\mathbb{S}^1)\) which respectively satisfy and do not satisfy the bounded distortion property. A natural question concerns the relative sizes of the sets \(    \mathcal{B} \) and \(   \mathcal B^{c} \) in  \( \Gamma_{\lambda}(\mathbb{S}^1)\).  By standard arguments it can be shown that if \( f \) is \(C^{1+\alpha}\), i.e if the derivative \( f'\) is \emph{H\"older continuous}, then \( f \) has bounded distortion, and it was proved in \cite{4}. that bounded distortion also holds whenever the derivative \( f'\) has a modulus of continuity which is \emph{Dini-integrable}, which is a strictly weaker condition that being H\"older continuous. Thus all maps with such regularity, which also preserve Lebesgue measure, belong to \( \mathcal B\). However, recently in \cite{6} we have shown that many maps which preserve the Lebesgue measure do not have the bounded distortion property and here we extent this observation to show that  in fact \( C^{1}\) \emph{generic maps in $ \Gamma_{\lambda,er}(\mathbb{S}^1)$ do not have bounded distortion}. 

\begin{theorem}\label{thm:main}
The set $\mathcal{B}^{c}$ is $C^1$ residual in $ \Gamma_{\lambda,er}(\mathbb{S}^1)$.
\end{theorem}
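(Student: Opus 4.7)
The plan is a standard Baire category argument. Writing
\[
\mathcal{B}^c = \bigcap_{N \geq 1} U_N, \qquad U_N := \{f \in \Gamma_{\lambda, er}(\mathbb{S}^1) : \mathcal{D}_f > N\},
\]
it suffices to show that each $U_N$ is open and dense in $\Gamma_{\lambda, er}(\mathbb{S}^1)$ in the $C^1$ topology.

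\textbf{Openness} follows from the lower semicontinuity of $f \mapsto \mathcal{D}_f$. For each fixed $n$, the partition $\mathcal{P}_n$ of injectivity domains of $f^n$ consists of finitely many intervals whose endpoints depend continuously on $f$ (they are preimages, under iterates of $f$, of the branch endpoints), while $(f^n)'$ depends continuously on $(f,x)$ in the $C^1$ topology. Hence $D_n(f) := \sup_{\omega \in \mathcal{P}_n} \sup_{x,y \in \omega} \log |(f^n)'(x)/(f^n)'(y)|$ is continuous in $f$, so $\mathcal{D}_f = \sup_n D_n(f)$ is lower semicontinuous, and $U_N$ is open.

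For \textbf{density}, it is enough to show that $V_N := \{f \in \Gamma_\lambda(\mathbb{S}^1) : \mathcal{D}_f > N\}$ is dense in $\Gamma_\lambda$: by \cite{2} the set $\Gamma_{\lambda,er}$ is $C^1$-residual, hence dense, in $\Gamma_\lambda$; a nonempty relatively open subset of $\Gamma_{\lambda,er}$ has the form $W' \cap \Gamma_{\lambda,er}$ with $W' \subseteq \Gamma_\lambda$ nonempty open, so $W' \cap V_N$ is nonempty open in $\Gamma_\lambda$ (by density of $V_N$) and intersects the dense $\Gamma_{\lambda,er}$, yielding a point of $U_N = V_N \cap \Gamma_{\lambda,er}$. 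To prove density of $V_N$, given $f \in \Gamma_\lambda$ with branches $f_1, \ldots, f_n$ and $\varepsilon > 0$, I would invoke Proposition \ref{prop:extension}: choose a branch $f_i$ with $i \neq i_0$ and replace it by $\tilde f_i := f_i + \phi$, where $\phi$ is $C^1$ with small $C^1$-norm, supported compactly in the interior of $I_i$, and with $\phi \equiv \phi' \equiv 0$ near $\partial I_i$. The matching conditions \eqref{eq:matching} at $\partial I_i$ are preserved, conditions \eqref{eq:The condition to extend a full branch to an expanding map}--\eqref{eq:The condition to extend a full branch to an expanding map2} are unaffected since the endpoint derivatives are unchanged, and \eqref{eq:condition 1} remains strictly satisfied for $\phi$ small (it holds strictly for $f$ by Lebesgue-preservation). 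Proposition \ref{prop:extension} then produces a unique compensating branch $\tilde f_{i_0}$, close in $C^1$ to $f_{i_0}$, so the assembled map $g$ lies in $\Gamma_\lambda$ with $\|g-f\|_{C^1}$ arbitrarily small.

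The \textbf{main obstacle} is to design $\phi$ so that the resulting $g$ has $\mathcal{D}_g > N$. Following the strategy of \cite{6}, I would concentrate $\phi$ near a fixed point $p \in I_i$ of $f$, building $\phi$ as a superposition of $M$ small bumps in $\phi'$ at geometrically decreasing scales $\sim \sigma^{-k}$ around $p$. A short orbit segment shadowing $p$ then passes through one scale per iterate, and the log-ratio $\log |(g^M)'(x)/(g^M)'(y)|$, for $x,y$ in a suitable element of $\mathcal{P}_M$, accumulates an additive contribution from each bump. Arranging the amplitudes so that these log-contributions sum to more than $N$ while keeping $\|\phi\|_{C^1} < \varepsilon$ then forces $\mathcal{D}_g > N$. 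Making this scheme quantitatively precise---verifying that the bumps at different scales are indeed resolved on a single injectivity domain of $g^M$, and carefully tracking the cumulative log-distortion contribution against the $C^1$ budget for $\phi$---is the technical heart of the density argument.
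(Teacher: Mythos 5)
Your overall architecture coincides with the paper's: the unbounded-distortion set is exhibited as a countable intersection of open sets via continuity of the finite-level distortion functionals (your $D_n(f)$ is the paper's $d_n(f)$, and the openness of your $U_N$ is dual to the paper's closedness of $D_{n,k}=d_k^{-1}([0,n])$); density is obtained by perturbing a single branch near a fixed point and restoring Lebesgue-invariance with Proposition \ref{prop:extension}; and the passage from $\Gamma_{\lambda}(\mathbb{S}^1)$ to $\Gamma_{\lambda,er}(\mathbb{S}^1)$ uses the residuality of the latter. Your relative-topology argument for this last step is in fact cleaner than the paper's, which routes through Proposition \ref{residual}.

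The one substantive gap is the step you yourself flag as ``the technical heart'': you describe a multi-bump perturbation at geometric scales whose cumulative log-distortion should exceed $N$, but you do not verify that the log-gain per scale can be made to sum above $N$ while the $C^1$-norm of $\phi$ stays below $\varepsilon$ --- and this tension (each bump contributes to the distortion and to the $C^1$ budget through the same quantity $\|\phi'\|_\infty$) is exactly where a naive bump construction can fail. The paper resolves it differently and in one stroke: rather than finitely many bumps per $N$, it perturbs the derivative itself near the fixed point, setting $\tilde f' = f' + \epsilon\,\omega$ on a neighbourhood of $0$, where $\omega$ is a \emph{non-Dini-integrable} modulus of continuity. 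A point $x_k$ in the first cylinder of level $k$ whose orbit shadows $0$ then picks up a contribution of order $\sum_{0\le i<k}\omega(C\sigma^{i-k})$ in the distortion estimate (via the mean value theorem applied to $\log$), and non-Dini-integrability forces this sum to diverge as $k\to\infty$. This produces a single $\epsilon$-perturbation with genuinely unbounded distortion, i.e.\ one lying in every $V_N$ simultaneously, so the per-$N$ bookkeeping in your scheme becomes unnecessary. If you wish to complete your version, the essential point to import is precisely this: the perturbation of $f'$ must fail to be Dini-continuous at the fixed point, since the amplitude of the bump at scale $\sigma^{-j}$ must decay slowly enough that the series of contributions diverges, which is the non-Dini condition in disguise; keeping the $C^1$-norm small is then automatic because $\epsilon\,\omega$ is uniformly small even though its modulus of continuity is bad.
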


This results suggests that other techniques, which do not rely on bounded distortion,  need to be developed to prove the ergodicity of Lebesgue measure in specific classes of maps.

\section{Overview of the proof}
In Section \ref {sec:extension} we prove the Missing Branch Extension Proposition \ref{prop:extension} which is the key ingredient in the proof of the following result.  Consider the set \(  \Gamma_{\lambda}(\mathbb{S}^1) \)  of  uniformly expanding \( C^{1}\) circle maps which preserve Lebesgue measure but for which Lebesgue measure is not necessarily ergodic. Letting 
\[
 \mathcal{B_{\lambda}}=\lbrace f\in \Gamma_{\lambda}(\mathbb{S}^1):  f\ \text{has bounded distortion}\rbrace
\  \text{ and } \ 
     \mathcal B_{\lambda}^{c} \coloneqq  \Gamma_{\lambda}(\mathbb{S}^1)\setminus \mathcal B 
\]
we will prove the following in  Section \ref{sec:dense}. 
\begin{proposition}\label{dense}
\(  \mathcal{B_{\lambda}} \) is \( C^{1}\) dense  in $\Gamma_{\lambda}(\mathbb{S}^1)$.
\end{proposition}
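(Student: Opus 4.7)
The plan is to approximate any $f \in \Gamma_{\lambda}(\mathbb{S}^1)$ in the $C^1$ topology by a $C^{1+\alpha}$ Lebesgue-preserving uniformly expanding map $\tilde{f}$, which then automatically has bounded distortion (by the classical argument recalled in the introduction, e.g. \cite{4}) and hence lies in $\mathcal{B_{\lambda}}$. The main tool will be the Missing Branch Extension Proposition \ref{prop:extension}: it lets us choose $n-1$ of the branches essentially freely and canonically recover the missing one so that Lebesgue invariance is restored.

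Fixing $f \in \Gamma_{\lambda}(\mathbb{S}^1)$ with branches $f_i : I_i \to [0,1]$ on $I_i = [x_i^-, x_i^+]$, I would pick an arbitrary index $i_0$ and approximate each $f_i$ with $i \neq i_0$ by a $C^{1+\alpha}$ uniformly expanding diffeomorphism $\tilde{f}_i : I_i \to [0,1]$ which is arbitrarily $C^1$-close to $f_i$ and satisfies the \emph{pinning} conditions $\tilde{f}_i(x_i^\pm) = f_i(x_i^\pm)$ and $\tilde{f}_i'(x_i^\pm) = f_i'(x_i^\pm)$. Such an approximation is standard: mollify $f_i'$ on $I_i$, overwrite it with $f_i'$ on a tiny neighborhood of each endpoint, and restore $\int_{I_i} \tilde{f}_i' = 1$ by an arbitrarily small interior correction. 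Next I would verify that the family $\{\tilde{f}_i\}_{i\neq i_0}$ satisfies the hypotheses of Proposition \ref{prop:extension}. The strict inequality \eqref{eq:condition 1} is stable under small $C^1$ perturbations, because for $f$ itself one has $\sum_{i\neq i_0} 1/f_i'(f_i^{-1}(x)) \leq 1 - 1/\|f'\|_\infty < 1$; the derivative-matching conditions \eqref{eq:matching}, \eqref{eq:The condition to extend a full branch to an expanding map}, \eqref{eq:The condition to extend a full branch to an expanding map2} all held for $f$ and involve only the derivatives $f_i'(x_i^\pm)$ with $i \neq i_0$, which have been preserved \emph{exactly} by the pinning.

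Applying Proposition \ref{prop:extension} then produces a (unique) extension $\tilde{f} \in \Gamma_{\lambda}(\mathbb{S}^1)$ whose $i$-th branches coincide with $\tilde{f}_i$ for $i \neq i_0$. It remains to check that $\tilde{f}$ is $C^{1+\alpha}$ and $C^1$-close to $f$. Both follow from the formula $(\tilde{f}_{i_0}^{-1})'(x) = 1 - \sum_{i \neq i_0} 1/\tilde{f}_i'(\tilde{f}_i^{-1}(x))$ dictated by Lebesgue invariance: the right-hand side is $C^{1+\alpha}$ since each $\tilde{f}_i$ is, forcing $\tilde{f}_{i_0}^{-1}$ (and hence $\tilde{f}_{i_0}$) to be $C^{1+\alpha}$ as well; and it is $C^0$-close to the analogous expression for $f_{i_0}^{-1}$, giving $C^1$-closeness of $\tilde{f}_{i_0}^{-1}$ to $f_{i_0}^{-1}$ on $[0,1]$, and after inversion of $\tilde{f}$ to $f$ on $\mathbb{S}^1$.

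The main obstacle I anticipate is the bookkeeping in the branch-approximation step: one must simultaneously match the boundary values, match the boundary derivatives, keep each $\tilde{f}_i$ uniformly expanding with the correct normalization $\tilde{f}_i(I_i) = [0,1]$, and stay $C^1$-close to $f_i$. This is elementary but fiddly. Once that is in place, Proposition \ref{prop:extension} plus the standard implication $C^{1+\alpha} \Rightarrow$ bounded distortion finishes the proof essentially mechanically.
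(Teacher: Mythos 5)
Your proposal proves the statement as literally printed, but that statement is a typo in the paper, and your argument establishes the complementary fact to the one the paper actually proves and needs. The paper's own proof of Proposition \ref{dense} (Section \ref{sec:dense}) shows that arbitrarily $C^1$-close to any $f\in\Gamma_{\lambda}(\mathbb{S}^1)$ there is a Lebesgue-preserving map with \emph{unbounded} distortion: it adds $\epsilon\omega$ to $f'$ near the fixed point, where $\omega$ is a non-Dini-integrable modulus of continuity, re-closes the map to a Lebesgue-preserving one via Proposition \ref{prop:extension}, and then shows that the distortion along cylinders shrinking to the fixed point contains a divergent series $\sum_{i<k}\omega(C\sigma^{i-k})$. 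This is what Theorem \ref{thm:main} requires: in the Baire argument of Section 2 the unbounded-distortion set is exhibited as a $G_{\delta}$ (its complement is the $F_{\sigma}$ set $\bigcup_{n}\bigcup_{m}\bigcap_{k\geq m}D_{n,k}$), and Proposition \ref{dense} is invoked precisely to supply its density. Density of the bounded-distortion set $\mathcal{B}_{\lambda}$, which is what you prove, contributes nothing to that argument — $\mathcal{B}_{\lambda}$ is only known to be $F_{\sigma}$, so its density does not make it residual, and it certainly cannot make its complement residual. The key idea you are missing is the opposite of smoothing: one must \emph{destroy} the regularity of $f'$ at a point that orbits return to (the fixed point), so that the telescoped distortion sums $\sum_{i<k}\lvert f'(f^{i}x)-f'(f^{i}y)\rvert$ accumulate a divergent Dini-type series.

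As a secondary matter, even read as a proof of the literal statement your construction has a flaw: if you ``overwrite $\tilde f_i'$ with $f_i'$ on a tiny neighborhood of each endpoint,'' then $\tilde f_i'$ coincides there with the merely continuous $f_i'$ and need not be H\"older, so $\tilde f_i$ is not $C^{1+\alpha}$. To pin the endpoint derivative while keeping H\"older regularity you should instead set $\tilde f_i'$ equal to the constant $f_i'(x_i^{\pm})$ (or any H\"older function with that boundary value) near the endpoints and then renormalize. The rest of your scheme — stability of the strict inequality \eqref{eq:condition 1}, exact preservation of the matching conditions \eqref{eq:matching}--\eqref{eq:The condition to extend a full branch to an expanding map2} by pinning, and regularity of the reconstructed branch from the transfer-operator identity — is sound, and is in fact the same use of Proposition \ref{prop:extension} that the paper makes when it re-closes its (roughening, not smoothing) perturbation.
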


Then,  in Section \ref{sec:residual} we will prove the following. 

\begin{proposition}\label{residual}
    The space $\Gamma_{\lambda}(\mathbb{S}^1)$ is \( C ^{1}\) residual in its completion $\Gamma^{\star}_{\lambda}(\mathbb{S}^1)$.
\end{proposition}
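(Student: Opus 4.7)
The plan is to show that $\Gamma_{\lambda}(\mathbb{S}^1)$ is in fact an \emph{open} and dense subset of $\Gamma^{\star}_{\lambda}(\mathbb{S}^1)$, which is strictly stronger than being $C^1$ residual. Identifying the abstract completion with the $C^1$-closure of $\Gamma_{\lambda}(\mathbb{S}^1)$ inside the complete metric space $C^1(\mathbb{S}^1,\mathbb{S}^1)$ makes density tautological, so only openness requires an argument.

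First, I would verify that Lebesgue preservation is preserved under $C^0$ limits, and hence under $C^1$ limits. If $f_n \to g$ uniformly with each $f_n$ Lebesgue-preserving, then for every continuous $\varphi\colon \mathbb{S}^1 \to \mathbb{R}$ one has $\varphi \circ f_n \to \varphi \circ g$ uniformly, so
\[
\int \varphi \, d\lambda \;=\; \int \varphi \circ f_n \, d\lambda \;\longrightarrow\; \int \varphi \circ g \, d\lambda,
\]
forcing $g$ to preserve $\lambda$. Similarly, the pointwise inequality $f_n' \geq \sigma_n > 1$ passes to the limit only as $g' \geq 1$, so the sole feature of $\Gamma_{\lambda}(\mathbb{S}^1)$ that can fail in the completion is uniform expansion.

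Next, I would exploit the continuity of the functional $f \mapsto \min_{\mathbb{S}^1} f'$ in the $C^1$ topology. Given $f \in \Gamma_{\lambda}(\mathbb{S}^1)$ with $f' \geq \sigma > 1$, every $g \in \Gamma^{\star}_{\lambda}(\mathbb{S}^1)$ with $\|g - f\|_{C^1} < (\sigma - 1)/2$ satisfies $g' > 1$; such a $g$ is automatically $C^1$, orientation-preserving, uniformly expanding, of the same degree as $f$, and, by the preceding step, Lebesgue-preserving. Hence $g \in \Gamma_{\lambda}(\mathbb{S}^1)$, which shows that $\Gamma_{\lambda}(\mathbb{S}^1)$ is open in $\Gamma^{\star}_{\lambda}(\mathbb{S}^1)$; combined with tautological density this yields $C^1$ residuality.

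The argument is essentially soft and I do not foresee a real obstacle. The only substantive inputs are the standard closedness of Lebesgue preservation under uniform limits and the crucial structural observation that uniform expansion is an \emph{open} condition (namely $\min f' > 1$) in the $C^1$ topology, whereas membership in $\Gamma^{\star}_{\lambda}(\mathbb{S}^1) \setminus \Gamma_{\lambda}(\mathbb{S}^1)$ is precisely forcing the closed condition $\min f' = 1$. The mild subtlety worth spelling out is the identification of the abstract metric completion with the closure in $C^1(\mathbb{S}^1,\mathbb{S}^1)$, which ensures that elements of $\Gamma^{\star}_{\lambda}(\mathbb{S}^1)$ are genuine $C^1$ maps to which the preceding two observations apply.
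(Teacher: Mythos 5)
Your proposal is correct and follows essentially the same route as the paper: both arguments rest on the observations that the completion only adjoins maps with $\min f' = 1$ (density being tautological) and that uniform expansion is an open condition in the $C^1$ topology. The only difference is cosmetic --- you establish openness of $\Gamma_{\lambda}(\mathbb{S}^1)$ directly, whereas the paper writes it as a countable intersection of open sets $S_{I_n}$ to exhibit a $G_{\delta}$ structure, which your stronger conclusion subsumes; your explicit verification that Lebesgue preservation survives $C^0$ limits is a detail the paper leaves implicit.
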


Now we will give a proof of the Theorem assuming the previous propositions.

\begin{proof}[Proof of Theorem \ref{thm:main}]
Let us denote by $d_{k}$ the map:
\begin{equation}
    d_{k}:\Gamma_{\lambda}(\mathbb{S}^1)\to\mathbb{R}_{+}
\end{equation}
defined by 
\begin{equation*}
    d_{k}(f)=\sup_{\substack{x,y\in\omega^{k}_{i}\\ 1\leq i\leq deg(f)^k}} |\log\dfrac{f^{k}(x)}{f^{k}(y)}|.
\end{equation*}
For every $k\in \mathbb{N}$, $d_k$ is continuous in the $C^1$ topology, to see that, first notice that for $\epsilon>0$ small enough and $f,g\in\Gamma_{\lambda}(\mathbb{S}^{1})$ such that $d(f,g)\leq \epsilon$, where $d$ denotes the $C^1$-distance, then $deg(f)=deg(g)$. On the other hand $k$ being fixed, $d(f^{k},g^{k})\leq C_{k}\epsilon$ where $C_{k}$ is a positive constant depending only on $k$. This two remarks are enough to conclude the continuity of $d_{k}$.

    Now notice that the complement of $\mathcal{UB}$ in $\Gamma_{\lambda}(\mathbb{S}^{1})$ is equal to the following set:

\begin{equation}
    \bigcup\limits_{n\in \mathbb{N}}\bigcup\limits_{m\in\mathbb{N}}\bigcap\limits_{k\geq m}D_{n,k}
\end{equation}
where $D_{n,k}$ is the set of elements of $\Gamma_{\lambda}(\mathbb{S}^{1})$ whose distortion at level $k$ is less or equal to $n$, in more precise terms $D_{n,k}=d_{k}^{-1}([0,n])$. This sets are closed in the $C^{1}$-topology since for every $k\in\mathbb{N}$ the map $d_{k}$ is continuous and so we conclude also that the sets is $\bigcap\limits_{k\geq m}D_{n,k}$ are closed, hence $\mathcal{UB}$ is the complement of a   countable intersection of closed sets and so we conclude that $\mathcal{B}$ is a $G_{\delta}$ set, by proposition \ref{dense} we conclude it is residual. Now by \cite{8} we know that  $\Gamma_{\lambda,er}(\mathbb{S}^{1})$ is residual in $\Gamma_{\lambda}(\mathbb{S}^{1})$ and by proposition \ref{residual} we conclude that the intersection $\mathcal{B}\cap\Gamma_{\lambda,er}(\mathbb{S}^{1})$ is residual in $\Gamma_{\lambda,er}(\mathbb{S}^{1})$. This finishes the proof of the theorem.
\end{proof}

  \section{Proof of Proposition  \ref{prop:extension}}\label{sec:extension}
\begin{proof}
We will start by recalling one of the classical tools to show that a measure is invariant. Let $f\in E^1(S^1)$ and, for all $h\in L^1_{\lambda}(S^1)$ and  $\mu_{h} \coloneqq h\cdot \lambda$, we define the transfer operator associated to $f$ and acting on $L^1_{\lambda}(S^1)$ as
\begin{equation}
    Ph=\dfrac{d\big(f_{*}\mu_{h}\big)}{d\lambda}. 
\end{equation}
This operator can be interpreted as the density of the push-forward of measures in respect to Lebesgue. It is well known that the fixed points of $P$ corresponds to the densities of $f$-invariant measures and that the transfer operator for maps of degree $n$ has an explicit formula given by 
\begin{equation}\label{eq:transfer}
    Ph(x)=\sum\limits_{y\in f^{-1}(x)}\dfrac{h(y)}{f'(y)}.
\end{equation}
Let us now consider $f$ to be an expanding circle map of degree $n$, represented as a full branch map of the unit interval with $n$ branches $\lbrace f_{i}\rbrace_{1\leq i\leq n}$ defined on adjacent intervals $\lbrace I_{i}=[x^{-}_{i},x^{+}_{i}]\rbrace_{1\leq i\leq n}$. Let $h:[0,1]\to\mathbb{R}_{+}$ be an $L_{\lambda}^1$ function. $h$ is the density of an $f$-invariant measure if and only if the relation \ref{eq:transfer} is satisfied, which can be written as:

\begin{equation}\label{eq:transfer1}
h(x)=\sum\limits_{y\in f^{-1}(x)}\dfrac{h(y)}{f'(y)}=\sum\limits_{1\leq y_{i}\leq n}\dfrac{h(y_{i})}{f'(y_{i})}.
\end{equation},
where each $y_{i}$ represents the pre-image of $x$ by the $i$-th branch of $f$.
Now take $h$ to be identically equal to 1 i,e the density of Lebesgue measure, and suppose that $n-1$ branches are known, we want to show we can construct the missing $n$-th branch such that the resulting map is a circle expanding map preserving Lebesgue measure. Equation \ref{eq:transfer1} is equivalent to

\begin{equation}\label{eq:transfer2}
\dfrac{1}{f'(y_{i_{0}})}=1-\sum_{\substack{1\leq i\leq n\\ i\neq i_{0}}}\dfrac{1}{f'(y_{i})},
\end{equation}
where $i_{0}$ is the index of the missing branch. Notice that $x_{i}=f_{i}^{-1}(x)$, equation \ref{eq:transfer2} then becomes:

\begin{equation}
    \dfrac{1}{f'(f^{-1}_{i_{0}}(x))}=1-\sum_{\substack{1\leq i\leq n\\ i\neq i_{0}}}\dfrac{1}{f'(f^{-1}_{i}(x))},
\end{equation}
If condition \ref{eq:condition 1} holds then we obtain the following first order $ODE$:

\begin{equation}
f'_{i_{0}}(x)=\dfrac{1}{1-\sum_{\substack{1\leq i\leq n\\ i\neq i_{0}}}\dfrac{1}{f'(f^{-1}_{i}(f_{i_{0}(x)}))}},
\end{equation}
this is a continuous first order ODE defined on a compact  rectangular domain, by Peano's existence theorem, there must exists a maximal solution defined on $I_{i_{0}}$ with the initial condition that $f_{i_{0}}(x^{-}_{i_{0}})=0$. 

We will show that this solution is an expanding diffeomorphism of $I_{i_{0}}$ onto the interval $[0,1]$ and that along the other fixed branches it defines a circle map, we will also show the solution is unique using the dynamics since Peano's existence theorem fails to ensure existence under only a continuity assumption. 

First, notice that by (\ref{eq:condition 1}) we get $f'_{i_{0}}>1$, hence it remains only to prove its surjective, which means $f_{i_{0}}(x^{+}_{i_{0}})=1$, indeed, by contradiction, suppose that $f_{i_{0}}(x^{+}_{i_{0}})<1$ and let $I^{\star}=[0,f_{i_{0}}(x^{+}_{i_{0}})]$. Notice that on $I^{\star}$ equation $(12)$ becomes:

\begin{equation}
\sum\limits_{y\in f^{-1}(x)}\dfrac{1}{f'(y)}=1
\end{equation}
this yields:

\begin{equation}
\lambda(f^{-1}(I^{\star}))=\int_{I^{\star}}\sum\limits_{y\in f^{-1}(x)}\dfrac{1}{f'(y)}dx=\lambda(I^{\star}),
\end{equation}
On the other hand, since $f^{-1}_{i_{0}}(I^{\star})=\emptyset$ we obtain that
\begin{equation}
    \lambda(f^{-1}([0,1]\setminus I^{\star})<\lambda(f^{-1}([0,1]\setminus I^{\star}),
\end{equation}
which leads to the following contradiction

\begin{equation*}
    \lambda(f^{-1}([0,1]))=\lambda(f^{-1}(I^{\star}))+ \lambda(f^{-1}([0,1]\setminus I^{\star})<1
\end{equation*}

We finally obtain that $f$ defines a full branch map of the interval, since also we have that $(13)$ is satisfied on all the unit interval then Lebesgue measure is preserved.

It remains to show that $f$ represents a circle map, indeed we need to check that $f'(x^{-}_{i})=f'(x^{+}_{i})$ as well as $f'(0)=f'(1)$. This follows directly my the assumption on the derivative of the branches at the end points of the partition elements.
\end{proof}

\section{Proof of proposition \ref{dense}}\label{sec:dense}

We will prove that there exists a dense set of Lebesgue reserving maps of the circle with unbounded distortion in $\Gamma_{\lambda}(\mathbb{S}^1)$. Our idea is to take an element of  $\Gamma_{\Lambda}(\mathbb{S}^1)$ with bounded distortion and prove it can be approximated arbitrarily by ones whitch have unbounded distortion. We recall the following useful definitions.

\begin{definition}
    We say that a map $\omega:\mathbb{R}_{+}\to\mathbb{R}_{+}$ is a modulus of continuity if $\omega(0)=0$, is continuous and concave. 
\end{definition}

\begin{definition}
    We say that a modulus of continuity $\omega$ is Dini-integrable if the following condition holds:

    \begin{equation*}
        \int_{0}^{1}\dfrac{\omega(t)}{t}dt<\infty.
    \end{equation*}
\end{definition}

\begin{proof}
    
Let $f\in \Gamma_{\lambda}(\mathbb{S}^1)\backslash \mathcal{B}$ and Let $\epsilon>0$, on a small enough neighborhood $V_{0}$ of the unique fixed point of $f$ which we are assuming to be $0$ let $\Tilde{f}'|_{V_{0}}=f'+\epsilon\omega$ where $\omega$ is a non Dini-integrable modulus of continuity. Let us extend the frst branch in a way that $d_{1}(f_{1},\Tilde{f_{1}})\leq\epsilon$ while keeping the remaining $n-2$ unchanged, this is possible by normality of the circle.

\begin{lemma}

The $(n-1)$-branches of $f$ obtained after perturbing the first branch extend to a Lebesgue preserving map $\Tilde{f}$ of degree $n$ which is $\epsilon$-close to $f$.
\end{lemma}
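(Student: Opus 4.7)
The approach is a direct application of the Missing Branch Extension Proposition \ref{prop:extension} to the $n-1$ branches $\tilde f_1,\, f_2,\, \ldots,\, f_{i_0-1},\, f_{i_0+1},\, \ldots,\, f_n$, for a choice of missing index $i_0\neq 1$. The proposition then produces a unique missing branch $\tilde f_{i_0}$ such that the resulting $\tilde f$ is a Lebesgue-preserving degree $n$ uniformly expanding $C^1$ circle map, and the work is to verify its hypotheses and to show the output is $C^1$-close to $f$.

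The first step is to ensure that the perturbation $\tilde f_1$ of the first branch can be realized so that $\tilde f'_1(x_1^-)=f'_1(x_1^-)$ and $\tilde f'_1(x_1^+)=f'_1(x_1^+)$, while still satisfying $\tilde f'_1|_{V_0}=f'_1+\epsilon \omega$ on a small neighbourhood $V_0\subset I_1$ of the fixed point $0$ and $\|\tilde f_1-f_1\|_{C^1}\leq\epsilon$. This is the construction alluded to by ``normality of the circle'' (use a smooth bump to interpolate on $I_1\setminus V_0$ so the derivative returns to $f'_1$ at $x_1^\pm$ and the branch still surjects onto $[0,1]$). Because $f$ is itself a $C^1$ circle map, this endpoint-matching for $\tilde f_1$ together with the fact that the remaining branches are unchanged immediately yields matching conditions \eqref{eq:matching}, \eqref{eq:The condition to extend a full branch to an expanding map}, \eqref{eq:The condition to extend a full branch to an expanding map2} for the perturbed family, since those conditions involve only $\tilde f'_i(x_i^\pm)=f'_i(x_i^\pm)$ for $i\neq i_0$.

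Second, I would verify \eqref{eq:condition 1}. Since $f\in\Gamma_\lambda(\mathbb{S}^1)$ already satisfies the transfer operator identity $\sum_{i=1}^n 1/f'_i(f_i^{-1}(x))=1$, the sum over $i\neq i_0$ equals $1-1/f'_{i_0}(f_{i_0}^{-1}(x))\leq 1-1/\|f'\|_\infty<1$. Replacing $f_1$ by $\tilde f_1$ changes the relevant summand by $O(\epsilon)$ uniformly in $x$ (using $\|\tilde f_1-f_1\|_{C^1}\leq \epsilon$ and uniform expansion to control $\tilde f_1^{-1}-f_1^{-1}$), so for $\epsilon$ small enough \eqref{eq:condition 1} still holds strictly. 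Proposition \ref{prop:extension} then provides the unique branch $\tilde f_{i_0}$ and the map $\tilde f$ is a Lebesgue-preserving $C^1$ uniformly expanding circle map of degree $n$.

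The remaining task, which I expect to be the main obstacle, is showing $d(\tilde f,f)\leq C\epsilon$ in the $C^1$ topology. The branches for $i\neq 1,i_0$ coincide with those of $f$, and $\tilde f_1$ is $\epsilon$-close to $f_1$ by construction, so everything reduces to controlling $\|\tilde f_{i_0}-f_{i_0}\|_{C^1}$. Both are solutions, with the same initial condition $y(x_{i_0}^-)=0$, of the ODE
\[
y'(x)=\frac{1}{1-\sum_{i\neq i_0}\dfrac{1}{g'_i(g_i^{-1}(y(x)))}},
\]
where $g_i$ are respectively the $f_i$'s or the $\tilde f_i$'s. The right-hand sides differ only through the replacement of $f_1$ by $\tilde f_1$, and this perturbation is $O(\epsilon)$ uniformly on $[0,1]$; the denominator is bounded away from $0$ by the strict inequality in \eqref{eq:condition 1}. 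Standard continuous dependence of ODE solutions on parameters then yields $\|\tilde f_{i_0}-f_{i_0}\|_{C^0}=O(\epsilon)$, and substituting back into the ODE gives the same bound on the derivatives. Hence $d(\tilde f,f)=O(\epsilon)$, which (after rescaling $\epsilon$) completes the proof.
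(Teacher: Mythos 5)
Your overall skeleton matches the paper's: both invoke Proposition \ref{prop:extension} to produce the missing branch, and both then concentrate on showing that the reconstructed branch $\tilde f_{i_0}$ is close to $f_{i_0}$. Where you differ is in the closeness argument. The paper works measure-theoretically: from $\sum_i\lambda(f_i^{-1}(I))=\sum_i\lambda(\tilde f_i^{-1}(I))=\lambda(I)$ it isolates the identity $\lambda(f_1^{-1}(I))-\lambda(\tilde f_1^{-1}(I))=\lambda(f_n^{-1}(I))-\lambda(\tilde f_n^{-1}(I))$, bounds the left side by the closeness of the perturbed first branch, and differentiates via the Lebesgue density theorem to get $C^1$-closeness of the last branch. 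You instead treat $f_{i_0}$ and $\tilde f_{i_0}$ as solutions of the defining ODE with $O(\epsilon)$-close right-hand sides and invoke continuous dependence. Your route has the advantage of making explicit the hypothesis-checking that the paper silently skips (endpoint derivative matching for $\tilde f_1$, and the persistence of condition \eqref{eq:condition 1} under small perturbation — the latter verification is genuinely useful and absent from the paper). Two caveats on your version: first, the right-hand side of the ODE is only \emph{continuous} in $y$ (the branches are merely $C^1$), so ``standard continuous dependence'' in the Lipschitz sense does not apply; you need the version that holds when the limiting equation has a unique solution (which the paper secures by a dynamical argument rather than by Picard--Lindel\"of), or else you should pass through the pointwise transfer-operator identity as the paper effectively does. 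Second, the perturbation of the summand $1/\tilde f_1'(\tilde f_1^{-1}(x))$ is only $o(1)$ uniformly as $\epsilon\to 0$, not $O(\epsilon)$, since controlling $f_1'(\tilde f_1^{-1}(x))-f_1'(f_1^{-1}(x))$ uses only uniform continuity of $f_1'$; this is harmless for density but your $O(\epsilon)$ bookkeeping should be weakened accordingly. Neither caveat is fatal, and your write-up is, if anything, more complete than the paper's.
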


\begin{proof}
    The extension to a Lebesgue preserving map of degree $n$ follows by proposition \ref{prop:extension}. To see that it is $\epsilon$-close to $f$ let us consider $f_{n}$ and $\Tilde{f_{n}}$ to be the last branches of both maps, since they preserve Lebesgue measure we have that for a every interval $I\subset [0,1]$:

    \begin{equation*}
        \lambda(I)=\sum\limits_{1\leq i\leq n}\lambda(f_{i}^{-1}(I))= \sum\limits_{1\leq i\leq n}\lambda(\Tilde{f}_{i}^{-1}(I))
    \end{equation*}

    By construction this is equivalent to

    \begin{equation}
        \lambda(f_{1}^{-1}(I))-\lambda(\Tilde{f}_{1}^{-1}(I))=\lambda(f_{n}^{-1}(I))-\lambda(\Tilde{f}_{n}^{-1}(I))
    \end{equation}

    since $| \lambda(f_{1}^{-1}(I))-\lambda(\Tilde{f}_{1}^{-1}(I))|\leq\epsilon$, by Lebesgue density theorem we obtain that $d_{1}(f_{n},\Tilde{f}_{n})\leq \epsilon$. This finishes the proof.
\end{proof}

To finish the proof, it remain to show that these perturbations yield element which have unbounded distortion.

\begin{lemma}
    For every $\epsilon>0$ and $f\in \Gamma_{\lambda}(\mathbb{S}^1)\backslash \mathcal{B}$, the perturbed map $\Tilde{f}$ has unbounded distortion.
\end{lemma}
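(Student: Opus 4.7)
The plan is to show that the perturbation, which inserts a non-Dini-integrable oscillation of the derivative at the fixed point $0$, forces the distortion of $\tilde f^n$ on the cylinders containing $0$ to diverge. The argument is purely local at $0$ and uses the non-Dini-integrability of $\omega$ rather than any quantitative property of $f$. The first step is to sharpen the free choice of $\omega$ so that it dominates the local oscillation of $f'$ at $0$: require $\omega(t)\geq 2\epsilon^{-1}\sup_{|x|\leq t}|f'(x)-f'(0)|$ for $t$ in a sub-neighborhood $V_0'\subset V_0$. Since $|f'-f'(0)|$ is continuous and vanishes at $0$, this can be arranged within the class of concave non-Dini-integrable moduli of continuity (any non-Dini $\omega_0$ may be replaced by $\omega_0+\psi$ with $\psi$ a concave majorant of the local oscillation, preserving non-Dini-integrability). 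Because $\omega(0)=0$, one also has $\tilde f(0)=0$ and $\tilde f'(0)=\sigma:=f'(0)$, so $0$ remains a fixed point with multiplier $\sigma$. On $V_0'\cap[0,\infty)$ we obtain the pointwise bound
\[
\tilde f'(x)-\sigma \;=\; \bigl(f'(x)-f'(0)\bigr)+\epsilon\omega(x) \;\geq\; \tfrac12\epsilon\,\omega(x)>0,
\]
which supplies both a lower bound and a consistent sign.

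Next I would introduce the nested cylinders $J_n=[0,c_n]$ of $\tilde f^n$ containing the fixed point: with $\phi=(\tilde f|_{J_0})^{-1}$ the contracting inverse branch ($\phi(0)=0$, $\phi'(0)=\sigma^{-1}$), set $c_n=\phi^{n-1}(x_1^+)$ so that $\tilde f^n$ maps $J_n$ bijectively onto $[0,1]$. Uniform expansion yields $a\sigma^{-n}\leq c_n\leq b\sigma^{-n}$ for constants $a,b>0$; for $n$ large we have $J_n\subset V_0'$, and the forward iterates $\tilde f^k(c_n)=c_{n-k}$ lie in $V_0'$ for $0\leq k\leq n-K_0$, with $K_0$ depending only on $V_0'$ and $\sigma$. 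Evaluating the distortion at the two endpoints $0$ and $c_n$ gives
\[
|D_n(\tilde f,J_n)| \;=\; \Bigl|\sum_{j=1}^{n}\log\tfrac{\sigma}{\tilde f'(c_j)}\Bigr| \;\geq\; \sum_{j=K_0+1}^{n}\bigl(\log\tilde f'(c_j)-\log\sigma\bigr),
\]
where each summand is nonnegative by the sign control above. The elementary inequality $\log(a/b)\geq (a-b)/a$ for $a\geq b>0$, together with the pointwise lower bound, then yields
\[
|D_n(\tilde f,J_n)| \;\geq\; \frac{\epsilon}{2\,\max\tilde f'}\sum_{j=K_0+1}^{n}\omega(c_j).
\]

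The final step exploits the non-Dini-integrability of $\omega$ and the geometric estimate $c_j\asymp\sigma^{-j}$ via the standard integral test: substituting $t=\sigma^{-j}$, so that $dt\sim(\log\sigma)\,t\,dj$, gives
\[
\sum_{j\geq 1}\omega(\sigma^{-j}) \;\asymp\; \frac{1}{\log\sigma}\int_0^{1}\frac{\omega(t)}{t}\,dt \;=\; +\infty,
\]
so $|D_n(\tilde f,J_n)|\to\infty$ and $\tilde f\in\mathcal{B}^c$, as required. The main obstacle is the sign/domination claim of the first step, which forces the choice of $\omega$ to depend (mildly) on $f$ through the local oscillation of $f'$ at the fixed point; the rest of the argument is a standard fixed-point computation, essentially the contrapositive of the Dini-integrable sufficient criterion for bounded distortion used in \cite{4}. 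The hypothesis $f\in\Gamma_\lambda(\mathbb{S}^1)\setminus\mathcal{B}$ enters only to guarantee the existence of the construction of the previous lemma; the failure of bounded distortion for $\tilde f$ is generated entirely by the inserted non-Dini oscillation at $0$.
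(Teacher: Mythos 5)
Your proof is correct and follows the same overall strategy as the paper: perturb the derivative at the fixed point $0$ by a non-Dini-integrable modulus, evaluate the distortion of $\tilde f^{\,n}$ on the cylinders $[0,c_n]$ accumulating at $0$, and conclude by the integral test that $\sum_j\omega(c_j)$ diverges. The genuine difference is in how the oscillation of the unperturbed $f'$ is handled. The paper splits the distortion sum into a part coming from $f'$ and a part coming from $\epsilon\omega$, and disposes of the first by asserting it is bounded ``since $f$ has bounded distortion'' --- a step that is both at odds with the stated hypothesis $f\in\Gamma_\lambda(\mathbb S^1)\setminus\mathcal B$ (evidently a typo for $f\in\mathcal B$) and not immediate even then, since $\sum_i\bigl(f'(\tilde f^{\,i}(x_k))-f'(0)\bigr)$ is taken along a $\tilde f$-orbit and is not literally a distortion sum for $f$. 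You avoid this entirely by enlarging $\omega$ with a concave majorant of the local oscillation $t\mapsto\sup_{|x|\le t}|f'(x)-f'(0)|$, which keeps $\omega$ concave and non-Dini-integrable, forces $\tilde f'(x)-\tilde f'(0)\ge\tfrac12\epsilon\,\omega(x)\ge 0$ near $0$, and makes every summand nonnegative, so no cancellation or bound on the $f'$-part is needed. This buys a proof that works for arbitrary $f$ in $\Gamma_\lambda(\mathbb S^1)$ regardless of its distortion properties, at the mild cost of letting $\omega$ depend on $f$, which is harmless in a density argument. Two small points to tidy up: the two-sided estimate $a\sigma^{-n}\le c_n\le b\sigma^{-n}$ with $\sigma=\tilde f'(0)$ is not what uniform expansion gives (you only get $(\max\tilde f')^{-n}\lesssim c_n\lesssim(\min\tilde f')^{-n}$), but monotonicity of $\omega$ and the lower bound $c_j\ge a\Lambda^{-j}$ with $\Lambda=\max\tilde f'$ already yield $\sum_j\omega(c_j)\ge\sum_j\omega(a\Lambda^{-j})=\infty$, which is all you use; and when you discard the indices $j\le K_0$ you should note that they contribute only a fixed finite amount, so the divergence is unaffected.
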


\begin{proof}
    By the formula given in the introduction of the definition of bunded distortion we have

    \begin{equation*}
|\log\dfrac{(\Tilde{f}^k)'(x)}{(\Tilde{f}^k)'(y)}|=|\sum\limits_{0\leq i\leq k-1} (\log (\Tilde{f}'(\Tilde{f}^i(x))-\log \Tilde{f}'(\Tilde{f}^i(y))|
\end{equation*}

\noindent Using mean value theorem, for every $0\leq i\leq k-1$ there exists 
\[
\lambda=\min\limits_{x\in S^1}\leq z_i\leq \sigma=\max\limits_{x\in S^1} |\Tilde{f}'(x)|>1
\]
 such that
\begin{equation*}
    \sum\limits_{0\leq i\leq k-1} \log \Tilde{f}'(\Tilde{f}^i(x))-\log \Tilde{f}'(\Tilde{f}^i(y))= \sum\limits_{0\leq i\leq k-1}\dfrac{1}{z_{i}}(\Tilde{f}'(\Tilde{f}^i(x))-\Tilde{f}'(\Tilde{f}^i(y))).
\end{equation*}
 Now for every $k\in\mathbb{N}$, let us take the first partition element of order $k$, i.e.  $\omega^{k}_{1}=[0,r_{k}]$ where $\sigma^{-k}\leq r_k\leq\lambda^{-k}$. Let us take $y=0$ and $x_{k}\in\omega^{k}_{1}$ such that $\Tilde{f}^{k}(x_k)\in V_{0}\backslash\lbrace 0\rbrace$, this possible by taking the pre-image of a point in $V_{0}\backslash\lbrace 0\rbrace$ by the first branch $\Tilde{f}_{1}^k$, that is, we consider $x_{k}=f^{-k}_{1}(x_0)$ We get

 \begin{equation*}
    \left|\log\dfrac{(\Tilde{f}^k)'(x_k)}{(\Tilde{f}^k)'(0)}\right|\geq\dfrac{1}{\sigma}\vert \sum\limits_{0\leq i\leq k-1} (\Tilde{f}'(\Tilde{f}^i(x_{k}))-\Tilde{f}'(0))\vert
\end{equation*}
but since $\Tilde{f}'(\Tilde{f}^i(x_{k})=f'(\Tilde{f}^i(x_{k})+\omega(\Tilde{f}^i(x_{k})$ we obtain

\begin{equation}
     \left|\log\dfrac{(\Tilde{f}^k)'(x_k)}{(\Tilde{f}^k)'(0)}\right|\geq\dfrac{1}{\sigma}\vert\sum\limits_{0\leq i\leq k-1} (f'(\Tilde{f}^i(x_{k}))-f'(0))+ \sum\limits_{0\leq i\leq k-1}\omega(\Tilde{f}^i(x_k))\vert
\end{equation}

by choice of $x_k$ we obtain that

\begin{equation*}
      \left|\log\dfrac{(\Tilde{f}^k)'(x_k)}{(\Tilde{f}^k)'(0)}\right|\geq \vert \sum\limits_{0\leq i\leq k-1} (f'(C\sigma^{i-k})-f'(0))+\sum\limits_{0\leq i\leq k-1} \omega(C\sigma^{i-k})\vert,
\end{equation*}
Where $C>0$ is a constant. Since $f$ has bounded distortion the first term is bounded, and since $\omega$ is not Dini-integrable by \cite{9} we deduce that the second sum diverges hence we obtain unbounded distortion.
\end{proof}
This finishes the proof of the proposition.
\end{proof}

\section{Proof of proposition \ref{residual}}\label{sec:residual}
We recall the definition of a residual set.
\begin{definition}
    A subset $R\subset X$ of a metric space is said to be residual if it is a dense $G_{\delta}$ set, we say that elements of $R$ are generic in $X$.
\end{definition}

\begin{proof}
We want to prove that the space $\Gamma_{\lambda}(\mathbb{S}^1)$ is residual in its completion. Its clear that the completion is the following space:
\begin{equation*}
    \Gamma_{\lambda}^{\star}(\mathbb{S}^1)=\lbrace f:\mathbb{S}^1\to \mathbb{S}^1\ \text{such that}\ f'(x)\geq 1\ \text{for all}\ x\in [0,1] \rbrace
\end{equation*}

Now consider a countable basis of the topology of $\mathbb{S}^1$ by closed intervals $\lbrace I_{n}\rbrace$ and define the set:

\begin{equation*}
    S_{I_{n}}=\lbrace f\in   \Gamma_{\lambda}^{\star}(\mathbb{S}^1)\ \text{such that}\ f'|_{I_{n}}>1  \rbrace.
\end{equation*}

Clearly we have:
\begin{equation*}
    \Gamma_{\lambda}(\mathbb{S}^1)=\bigcap\limits_{n\in\mathbb{N}} S_{I_{n}}
\end{equation*}
and that $ S_{I_{n}}$ are open sets in the $C^1$ topology and hence  $\Gamma_{\lambda}(\mathbb{S}^1)$ is a $G_{\delta}$ set, on the other hand, every element in $ \Gamma_{\lambda}^{\star}(\mathbb{S}^1)$ is clearly arbitrarily close to an element of $\Gamma_{\lambda}(\mathbb{S}^1)$ and hence $\Gamma_{\lambda}(\mathbb{S}^1)$ is a residual set of its completion.
\end{proof}

\begin{center}
\textbf{Acknowledgments.}
\end{center}

\noindent I am very thankful to Stefano Luzzatto for reading this note and for his useful comments and suggestion in inmproving the text.

\printbibliography

\end{document}